    \crefname{ex}{Example}{Examples}
    \crefname{lem}{Lemma}{Lemmas}
    \crefname{thm}{Theorem}{Theorems}
    \crefname{prop}{Proposition}{Propositions}
\let\hide\iffalse
    \newcommand{\hspfig}{\hspace{1cm}}
        \tikzset{%
        fwdrxn/.style={very thick, arrows={-Stealth[length=5pt,width=5pt]}},
        revrxn/.style={very thick, arrows={-Stealth[length=5pt,width=5pt,left]}}
        }
	\newcommand\blue[1]{{\textcolor{blue}{#1}}}
	\definecolor{orange}{RGB}{250, 140, 0}
		\newcommand\orange[1]{{\textcolor{orange}{#1}}}
	\definecolor{turq}{RGB}{0, 160, 160}
	\definecolor{violet}{RGB}{164, 98, 234}
	\definecolor{df}{RGB}{3, 133, 122}
	\newtheorem{thm}{Theorem}[section]
	\newtheorem{lem}[thm]{Lemma}
	\newtheorem{prop}[thm]{Proposition}
	\newtheorem{cor}[thm]{Corollary}
	\newtheorem*{thm*}{Theorem}
	\theoremstyle{definition}
		\newtheorem{defn}[thm]{Definition}
		\newtheorem{ex}[thm]{Example}
	\newtheoremstyle{TheoremNum}
        {\topsep}{\topsep}              
        {\itshape}                      
        {}                              
        {\bfseries}                     
        {.}                             
        { }                             
        {\thmname{#1}\thmnote{ \bfseries #3}}
    \theoremstyle{TheoremNum}
\newcommand{\df}[1]{{{\bf\emph{#1}}}}		
    \def\thm@space@setup{%
        \thm@preskip=\parskip \thm@postskip=0pt
    }\makeatother
\newcommand{\eq}[1]{\begin{align*}#1\end{align*}}
	\newcommand{\eqn}[1]{\begin{align}#1\end{align}}  
\newcommand{\st}{\colon}                
\newcommand\mrm[1]{\mathrm{#1}}
\newcommand{\rr}{\ensuremath{\mathbb{R}}}
\renewcommand{\epsilon}{\varepsilon}	
\renewcommand{\phi}{\varphi}			
\DeclareMathOperator{\ran}{Im}	    	
\DeclareMathOperator{\Span}{span}		
\newcommand{\kk}{\kappa}
\newcommand{\vv}[1]{{\boldsymbol{#1}}}  
\newcommand{\mm}[1]{\mathbf{#1}}               
\newcommand{\rrp}{\rr_{\geq}}
\newcommand{\rrpp}{\rr_{>}}
\newcommand{\Gk}{\ensuremath{(G, \vv \kk)}}
\newcommand{\xx}{\vv x}
\newcommand{\yy}{\vv y}
\newcommand{\cf}[1]{\textsf{#1}}
\DeclareMathOperator{\codim}{codim}
\newcommand{\WRz}{$\mrm{WR}_0$}
\title{
    Uniqueness of weakly reversible and \\deficiency zero realizations of dynamical systems
}
\author[1,2]{
         Gheorghe Craciun%
}
\author[1]{
        Jiaxin Jin%
}
\author[1]{
        Polly Y. Yu%
}
\affil[1]{\small Department of Mathematics, University of Wisconsin--Madison}
\affil[2]{\small Department of Biomolecular Chemistry, University of Wisconsin--Madison}
\date{} 
\begin{document}

\maketitle

\begin{abstract}
A reaction network together with a choice of rate constants uniquely gives rise to a system of differential equations, according to the law of mass-action kinetics. On the other hand, {\em different} networks can generate {\em the same} dynamical system under mass-action kinetics. Therefore, the problem of identifying ``the" underlying network of a dynamical system is {\em not well-posed, in general}. Here we show that the problem of identifying an underlying {\em weakly reversible deficiency zero network} is well-posed, in the sense that the solution is unique whenever it exists. This can be very useful in applications because from the perspective of both dynamics and network structure, a weakly reversibly deficiency zero ($\textit{WR}_\textit{0}$) realization is the simplest possible one. 
Moreover,  while mass-action systems can exhibit practically any dynamical behavior,  including multistability, oscillations, and chaos, \WRz\ systems are remarkably stable for {\em any} choice of rate constants: they have a unique positive steady state within each invariant polyhedron, and cannot give rise to oscillations or chaotic dynamics.  We also prove that both of our hypotheses (i.e., weak reversibility and deficiency zero) are necessary for uniqueness. 
\end{abstract}

\section{Introduction}
\label{sec:intro}

A common approach to mathematical modeling in biochemistry,  molecular biology, ecology, and (bio)chemical engineering is based on non-linear interactions between different species or entities, e.g.,  metabolic or signaling pathways in cells, predator-prey relations in population dynamics, and reactions inside a chemical reactor~\cite{Malthus1798, Verhulst1838, Feinberg1987, Sontag2001}. These interactions, represented by a directed graph and under specified kinetic rules, generate a system of differential equations that model the time-dependent abundance of the interacting species. The most common kind of such kinetic rules is \emph{mass-action kinetics}, where the rate at which interaction occurs is proportional to the abundance of interacting species, and the resulting differential equations have a polynomial right-hand side. Complex qualitative dynamics such as multistability, oscillations, and chaos are possible under mass-action kinetics~\cite{voit2015-150, yu2018mathematical}. In general, determining the qualitative dynamics of a given mass-action system can be a very difficult task; on the other hand, certain structures in an associated directed graph, or \emph{reaction network}, are linked to special dynamical properties.

One such network property is \emph{weak reversibility}, i.e., every reaction is part of an oriented cycle. A weakly reversible system has at least one positive steady state within each invariant polyhedron~\cite{Boros2019}. Even though some weakly reversible systems may have infinitely many steady states~\cite{boros2019weakly}, they are conjectured to be persistent and even permanent~\cite{CraciunNazarovPantea2013_GAC}. After some restrictions on rate constants, weakly reversible mass-action systems are \emph{complex-balanced}, which are known for admitting a  globally defined strict  Lyapunov function~\cite{HornJackson1972,feinberg1972complex, horn1972necessary}.

In particular, complex-balanced systems cannot give rise to multistability, oscillations, or chaotic dynamics, and are conjectured to have a globally stable positive steady state within each invariant polyhedron; this conjecture has been proved under some additional assumptions~\cite{CraciunNazarovPantea2013_GAC, Anderson2011_GAC, BorosHofbauer2019_GAC, gopalkrishnan2014geometric}. The number of constraints on the rate constants necessary for complex-balancing is measured by an integer called the \emph{deficiency} of the network~\cite{CraciunDickensteinShiuSturmfels2009} (see \Cref{def:deficiency}). In the case of weak reversibility and deficiency zero, which we call $\textbf{\emph{WR}}_\textbf{\emph{0}}$ for short, the system is {\em always} complex-balanced, regardless of the choice of rate constants; this is the statement of the Deficiency Zero Theorem~\cite{feinberg1972complex,horn1972necessary, feinberg2019foundations}.

An example of a \WRz\ biochemical system is a model for T-cell receptor signal transduction~\cite{Sontag2001, Mckeithan1995}, whereby a T-cell receptor ($\cf{T}$) forms an initial ligand-receptor complex ($\cf{C}_0$) that is converted (by phosphorylation) to an active form ($\cf{C}_N$) via a sequence of intermediates ($\cf{C}_i$); the active form $\cf{C}_N$ of the complex is responsible for generating a signal. Also, any of the ligand-receptor complex $\cf{C}_i$ may dissociate. The reaction network (with rate constants as labeled) proposed by McKeithan for this process is shown in \Cref{fig:T-cell}. This reaction network is weakly reversible and deficiency zero, and thus any positive steady state is asymptotically stable within its invariant polyhedron. Indeed, any positive steady state is \emph{globally stable} within its invariant polyhedron~\cite{Sontag2001}.

\begin{figure}[h!tbp]
	\centering
		\begin{tikzpicture}[scale=1]
		\node [opacity=0] at (0,0.5) {};
		\node [opacity=0] at (0,-0.5) {};
		    \node (i) at (-2,0) {$\cf{T}+\cf{M}$}; 
		    \node (0) at (0,0) {$\cf{C}_0$};
		    \node (1) at (1.5,0) {$\cf{C}_1$};
		    \node (2) at (3.25,0) {\phantom{\,\,}$\cdots$\phantom{\,\,}};
		    \node (3) at (5,0) {$\cf{C}_i$};
		    \node (4) at (6.75,0) {\phantom{\,\,}$\cdots$\phantom{\,\,}};
		    \node (5) at (8.5,0) {$\cf{C}_N$};
            \draw [-{stealth}, thick, transform canvas={yshift=0ex}] (i)--(0) node [midway, above] {\footnotesize $\kk_1$};
            \draw [-{stealth}, thick, transform canvas={yshift=0ex}] (0)--(1) node [midway, above] {\footnotesize $\kk_{p,0}$};
            \draw [-{stealth}, thick, transform canvas={yshift=0ex}] (1)--(2) node [midway, above] {\footnotesize $\kk_{p,1}$};
            \draw [-{stealth}, thick, transform canvas={yshift=0ex}] (2)--(3) node [midway, above] {\footnotesize $\kk_{p,i-1}$};
            \draw [-{stealth}, thick, transform canvas={yshift=0ex}] (3)--(4) node [midway, above] {\footnotesize $\kk_{p,i}$};
            \draw [-{stealth}, thick, transform canvas={yshift=0ex}] (4)--(5) node [midway, above] {\footnotesize $\kk_{p,N-1}$};
            \draw [-{stealth}, thick, transform canvas={yshift=0ex}] (0.south) to[out=270, in=330] node [midway, above] {\footnotesize $\kk_{-,0}$} (i.south east) ;
            \draw [-{stealth}, thick, transform canvas={yshift=0ex}] (1.south) to[out=270, in=330] node [near start, above left] {\footnotesize $\kk_{-,1}$\!\!\!} (-1.6,-0.3);
            
            \draw [-{stealth}, thick, transform canvas={yshift=0ex}] (3.south) .. controls (5,-1.2) and (4,-1.4) .. (3,-1.4)
            .. controls (2.5,-1.4) and (0,-1.5) .. (-1.9,-0.3); 
                \node at (4.3,-0.8) {\footnotesize $\kk_{-,i}$};
            
            \draw [-{stealth}, thick, transform canvas={yshift=0ex}] (5.south) .. controls (8.5,-1) and (8,-1.2) .. (7,-1.4)
            .. controls (6,-1.6) and (3,-1.7) .. (2,-1.6)
            .. controls (1.5,-1.55) and (0,-1.6) .. (-2.2,-0.3);
                \node at (7.5,-0.95) {\footnotesize $\kk_{-,N}$};
		\end{tikzpicture}
	\caption{A mass-action system modelling signal transduction of a T-cell receptor~\cite{Mckeithan1995}. A T-cell receptor $\cf{T}$ initially binds to a peptide-major histocompatibility complex $\cf{M}$, while the active form $\cf{C}_N$ of the ligand-receptor complex is responsible for signal generation. This \WRz\ system has a unique globally stable equilibrium up to conservation laws~\cite{Sontag2001}.}
	\label{fig:T-cell}
\end{figure}
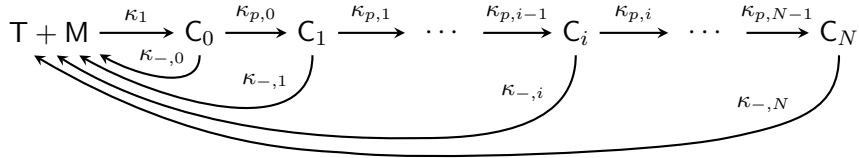

In short, under mass-action kinetics, a reaction network (dictating interactions of interest) and a choice of positive rate constants (proportionality constants for each interaction) uniquely determine the dynamics. Moreover, if the network satisfies certain conditions, then one can deduce the qualitative dynamics without solving the system of differential equations. 

On the other hand,  a given system of differential equations, even if known to have come from mass-action kinetics, is \emph{not} associated with a unique network structure~\cite{CraciunPantea2008}. Indeed, without additional requirements, there are infinitely many networks that can give rise to the same dynamical system under mass-action kinetics. Earlier studies took advantage of the possibility of finding a network with desirable properties, to conclude that a system of differential equations has certain dynamical properties~\cite{CraciunJinYu2019, CraciunJinYu_STN, JohnstonSiegelSzederkenyi2013, BrustengaCraciunSorea2020, CraciunSorea2020}. For example, the dynamical system
    \eq{ 
        \frac{dx}{dt} = 3 - 3x^3
    }
can be generated by any of the three networks in \Cref{fig:intro} using the rate constants labeled in the figure (and also can be generated by many other networks, for well-chosen rate constants). The network in \Cref{fig:intro-notWR} is neither weakly reversible nor has deficiency zero; the network in \Cref{fig:intro-notdef0} is weakly reversible but has a positive deficiency. The Deficiency Zero Theorem is therefore silent  until one recognizes that the same dynamics is also realized by the \WRz\ system in \Cref{fig:intro-def0}. While it is unnecessary to search for a \WRz\ realization for such simple differential equations, this process of finding \df{dynamically equivalent} (see \Cref{def:DE}) realizations applies to much more complicated and higher dimensional systems. Moreover, finding different realizations is essentially a linear feasibility problem, and algorithms exist for this very purpose~\cite{Szederkenyi2012_Toolbox, RudanSzederkenyiKatalinPeni2014}.

\renewcommand{\hspfig}{\hspace{1cm}}
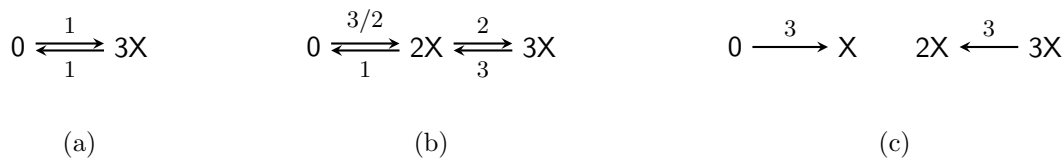
\begin{figure}[h!tbp]
	\centering
	\begin{subfigure}[t]{0.17\textwidth}
	\centering 
		\begin{tikzpicture}[scale=1.5]
		\node [opacity=0] at (0,0.5) {};
		\node [opacity=0] at (0,-0.5) {};
		    \node (0) at (0,0) {\cf{0}}; 
		    \node (3x) at (1,0) {\cf{3X}};
            \draw [-{stealth}, thick, transform canvas={yshift=0.28ex}] (0)--(3x) node [midway, above] {\footnotesize $1$};
            \draw [-{stealth}, thick, transform canvas={yshift=-0.28ex}] (3x)--(0) node [midway, below] {\footnotesize $1$};
		\end{tikzpicture}
		\caption{}
		\label{fig:intro-def0}
	\end{subfigure}
\hspfig
	\begin{subfigure}[t]{0.26\textwidth}
	\centering
		\begin{tikzpicture}[scale=1.5]
		\node [opacity=0] at (0,0.5) {};
		\node [opacity=0] at (0,-0.5) {};
		    \node (0) at (0,0) {\cf{0}}; 
		    \node (2x) at (1,0) {\cf{2X}};
		    \node (3x) at (2,0) {\cf{3X}};
            \draw [-{stealth}, thick, transform canvas={yshift=0.28ex}] (0)--(2x) node [midway, above] {\footnotesize $3/2$};
            \draw [-{stealth}, thick, transform canvas={yshift=0.28ex}] (2x)--(3x) node [midway, above] {\footnotesize $2$};
            \draw [-{stealth}, thick, transform canvas={yshift=-0.28ex}] (2x)--(0) node [midway, below] {\footnotesize $1$};
            \draw [-{stealth}, thick, transform canvas={yshift=-0.28ex}] (3x)--(2x) node [midway, below] {\footnotesize $3$};
		\end{tikzpicture}
		\caption{}
		\label{fig:intro-notdef0}
	\end{subfigure}
\hspfig
	\begin{subfigure}[t]{0.35\textwidth}
	\centering 
		\begin{tikzpicture}[scale=1.5]
		\node [opacity=0] at (0,0.5) {};
		\node [opacity=0] at (0,-0.5) {};
		    \node (0) at (0,0) {\cf{0}}; 
		    \node (x) at (1,0) {\cf{X}};
		    \node (2x) at (1.75,0) {\cf{2X}};
		    \node (3x) at (2.75,0) {\cf{3X}};
            \draw [-{stealth}, thick, transform canvas={yshift=0ex}] (0)--(x) node [midway, above] {\footnotesize $3$};
            \draw [-{stealth}, thick, transform canvas={yshift=0ex}] (3x)--(2x) node [midway, above] {\footnotesize $3$};
		\end{tikzpicture}
		\caption{}
		\label{fig:intro-notWR}
	\end{subfigure}
	\caption{Mass-action systems that give rise to the same system of differential equations. Labels on edges are rate constants for that reaction.}
	\label{fig:intro}
\end{figure}

The general non-uniqueness of networks that can generate a given dynamical system poses a challenge: what is a reaction mechanism that is consistent with kinetic data? The lack of network identifiability implies that even if one has experimental data with perfect accuracy and temporal resolution, it is impossible to determine the underlying reaction network, without imposing conditions on the network structure. Thus, it is reasonable to seek the \lq\lq simplest\rq\rq\ network that is consistent with the data. One may require minimal number of vertices in the network, or weak reversibility, or perhaps minimal deficiency; these are not unrelated. If there exists a weakly reversible realization, then there exists one that uses the minimal number of vertices, namely the distinct monomials appearing in the differential equations~\cite{CraciunJinYu2019}. Moreover, by minimizing the number of vertices in the network, one also minimizes deficiency (see \Cref{prop:V}).

Since \WRz\ realizations are desirable because of simplicity as well as stable dynamics (by the Deficiency Zero Theorem), the question we pose is: can there be {\em different} \WRz\ realizations for the same dynamical system? {\em The answer is no: $\textit{WR}_\textit{0}$ realizations are unique~(see \Cref{thm:WR}).} Moreover, we show that {\em both} weak reversibility and deficiency zero are necessary for uniqueness. 



This paper is organized as follows. \Cref{sec:mas} introduces mass-action systems and dynamical equivalence, as well as other notions and results of reaction network theory used throughout this work. Then we show the necessity of weak reversibility and deficiency zero in \Cref{sec:nec}, and describe restrictions on the network structure imposed by these two conditions in \Cref{sec:lem}. Finally, we prove the uniqueness of weakly reversible and deficiency zero realization in \Cref{sec:thm}.

\section{Background} 
\label{sec:mas}

In this section, we introduce mass-action systems and notions that are necessary for our exposition. For a short introduction to the mathematics of mass-action systems, see~\cite{yu2018mathematical}, and a detailed description of classical results,  see~\cite{feinberg2019foundations}. Throughout, let $\rrpp$ denote the set of positive real numbers, and $\rrpp^n$ the set of vectors with positive components, i.e., $\xx \in \rrpp^n$ if $x_i > 0$ for all $i = 1$, $2,\dots, n$. Analogously, let $\rrp$ and $\rrp^n$ denote the sets of non-negative numbers and vectors respectively. Summing over the empty set returns the zero vector, i.e., $\displaystyle \sum_{\yy \in \emptyset} \yy = \vv 0$. The disjoint union of sets is denoted $X \sqcup Y$. 

\bigskip

\begin{defn}
\label{def:crn}
	A \df{reaction network} is a directed graph $G = (V,E)$, where $V$ is a finite subset of $\rr^n$, and there are neither self-loops nor isolated vertices. 
\end{defn}

In the reaction network literature, a vertex is also called a \df{complex}. An edge $(\yy, \yy')$, also called a \df{reaction}, is denoted $\yy \to \yy'$. Vertices are points in $\rr^n$, so an edge $\yy \to \yy' \in E$ can be regarded as a bona fide vector between vertices. Each edge is associated to a \df{reaction vector} $\yy' - \yy \in \rr^n$.

The set of vertices of a reaction network can naturally be partitioned according to the connected components, also called \df{linkage classes} in the reaction network literature. Given a reaction network, we identify a linkage class by the subset of vertices in that connected component. A reaction network is said to be \df{weakly reversible} if every linkage class is strongly connected, i.e., every edge is part of an oriented cycle.

\begin{defn}
\label{def:affindep}
    A reaction network $(V,E)$ has \df{affinely independent linkage classes} if the vertices in each linkage class are affinely independent, i.e., if the vertices in $\{\yy_0, \yy_1,\ldots, \yy_m\} \subseteq V$ define a linkage class, then the set $\{ \yy_j - \yy_0 \st j=1,2,\ldots, m \}$ consists of linearly independent vectors.
\end{defn}

\begin{figure}[h!tpb]
\renewcommand{\hspfig}{\hspace{0cm}}
	\centering
	\begin{subfigure}[b]{0.3\textwidth}
	\centering 
		\begin{tikzpicture}
        \draw [step=1, gray!50!white, thin] (0,-0.3) grid (3.5,0.3);
		\node at (0,0.55) {};
		\node at (0,-0.5) {};
            \draw [->, gray] (0,0)--(3.75,0);
            \draw [gray] (0,-0.33)--(0,0.33);
            \node [inner sep=0pt] (0) at (0,0) {\blue{$\bullet$}};
            \node [inner sep=0pt] (3x) at (3,0) {\blue{$\bullet$}};
            \draw [-{stealth}, thick, blue, transform canvas={yshift=0.31ex}] (0)--(3x) node [midway, above] {\footnotesize $1$};
            \draw [-{stealth}, thick, blue, transform canvas={yshift=-0.31ex}] (3x)--(0) node [midway, below] {\footnotesize $1$};
		\end{tikzpicture}
		\caption{}
		\label{fig:intro-EG-CB}
	\end{subfigure}
		\hspfig
	\begin{subfigure}[b]{0.3\textwidth}
	\centering 
		\begin{tikzpicture}
        \draw [step=1, gray!50!white, thin] (0,-0.3) grid (3.5,0.3);
		\node at (0,0.55) {};
		\node at (0,-0.5) {};
            \draw [->, gray] (0,0)--(3.75,0);
            \draw [gray] (0,-0.33)--(0,0.33);
            \node [inner sep=0pt] (0) at (0,0) {\blue{$\bullet$}};
            \node [inner sep=0pt] (2x) at (2,0) {\blue{$\bullet$}};
            \node [inner sep=0pt] (3x) at (3,0) {\blue{$\bullet$}};
            \draw [-{stealth}, thick, blue, transform canvas={yshift=0.31ex}] (0)--(2x) node [midway, above] {\footnotesize $3/2$};
            \draw [-{stealth}, thick, blue, transform canvas={yshift=-0.31ex}] (2x)--(0) node [midway, below] {\footnotesize $1$};
            \draw [-{stealth}, thick, blue, transform canvas={yshift=0.31ex}] (2x)--(3x) node [midway, above] {\footnotesize $2$};
            \draw [-{stealth}, thick, blue, transform canvas={yshift=-0.31ex}] (3x)--(2x) node [midway, below] {\footnotesize $3$};
		\end{tikzpicture}
		\caption{}
		\label{fig:intro-EG-def1}
	\end{subfigure}
		\hspfig
	\begin{subfigure}[b]{0.3\textwidth}
	\centering 
		\begin{tikzpicture}
        \draw [step=1, gray!50!white, thin] (0,-0.3) grid (3.5,0.3);
		\node at (0,0.55) {};
		\node at (0,-0.5) {};
            \draw [->, gray] (0,0)--(3.75,0);
            \draw [gray] (0,-0.33)--(0,0.33);
            \node [inner sep=0pt] (0) at (0,0) {\blue{$\bullet$}};
            \node [inner sep=0pt] (x) at (1,0) {\blue{$\bullet$}};
            \node [inner sep=0pt] (2x) at (2,0) {\blue{$\bullet$}};
            \node [inner sep=0pt] (3x) at (3,0) {\blue{$\bullet$}};
            \draw [-{stealth}, thick, blue, transform canvas={yshift=0ex}] (0)--(x) node [midway, above] {\footnotesize $3$};
            \draw [-{stealth}, thick, blue, transform canvas={yshift=-0ex}] (3x)--(2x) node [midway, above] {\footnotesize $3$};
		\end{tikzpicture}
		\caption{}
		\label{fig:intro-EG-notWR}
	\end{subfigure}	
	\caption{The mass-action systems of \Cref{fig:intro} as embedded into the Euclidean space $\rr$. Labels on edges are rate constants.}
	\label{fig:intro-EG}
\end{figure}
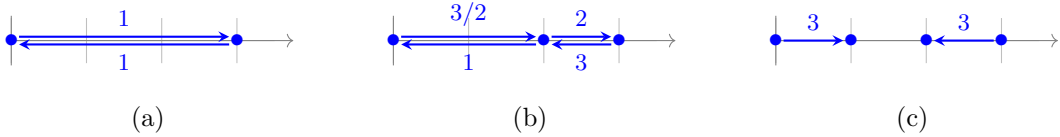

\begin{ex}
\label{ex:intro}
    The examples of \Cref{fig:intro} are represented as directed graphs embedded in $\rr$ in \Cref{fig:intro-EG}. The complexes $\cf{0}$ and $\cf{3X}$ correspond to the points $0$ and $3 \in \rr$ respectively. Similarly, the complexes $\cf{X}$ and $\cf{2X}$ can be understood as $1$ and $2 \in \rr$. The rate constants have been kept for clarity. An edge in a network is a vector that points from the source to the target. 
    
    The networks shown in \Cref{fig:intro-EG-CB,fig:intro-EG-def1} each has one linkage class, while that of \Cref{fig:intro-EG-notWR} has two linkage classes and is not weakly reversible. The network in \Cref{fig:intro-EG-def1} does \emph{not} have affinely independent linkage classes, since $0$, $2$, and $3$ are co-linear, thus not affinely independent, points of $\rr$. 
\end{ex}

\begin{defn}
\label{def:mas}
    Let $G = (V,E)$ be a reaction network in $\rr^n$. Let $\vv\kk \in \rrpp^{E}$ be a vector of \df{rate constants}. A \df{mass-action system} $\Gk$ is the weighted directed graph, whose \df{associated dynamical system} is the system of differential equations on $\rrpp^n$ 
    \eqn{\label{eq:mas}
        \frac{d\xx}{dt} = \sum_{\yy_i \to \yy_j \in E}\!\! \kk_{ij} \xx^{\yy_i}(\yy_j - \yy_i),
    }
    where $\xx^\yy = x_1^{y_1}x_2^{y_2}\cdots x_n^{y_n}$. 
\end{defn}

It is sometimes convenient to refer to $\kk_{ij}$ even though $\yy_i \to \yy_j$ may not be an edge in the network. In such cases, the convention is to take $\kk_{ij} = 0$. 

We can rearrange the sum on the right-hand side of \eqref{eq:mas} by grouping terms with the same monomial, each being multiplied by the weighted sum of reaction vectors originating from the corresponding source vertex, as in
    \eq{ 
        \frac{d\xx}{dt} = \sum_{\yy_i \in V} \,\xx^{\yy_i} \sum_{\yy_j \in V} \kk_{ij} (\yy_j - \yy_i).
    }
We give a name to the weighted sum associated with each monomial.

\begin{defn}
\label{def:directvect}
    Let $(G,\vv\kk)$ be a mass-action system, and let $\yy_i \in V$. The \df{net reaction vector from $\yy_i$} is 
    \eqn{ \label{eq:directvect}
        \sum_{\yy_j \in V} \kk_{ij} (\yy_j - \yy_i).
    }
\end{defn}
For convenience, we may refer to the net reaction vector even though $\yy_i \not\in V$; in this case, let the net reaction vector be the zero vector. 

The right-hand side of \eqref{eq:mas} clearly lies in the linear space of all net reaction vectors. It also lies in the (possibly larger)  \df{stoichiometric subspace}
    \eq{
        S = \Span\{ \vv y_j - \vv y_i \st \yy_i \to \yy_j \in E\} .
    }
Hence, any solution to \eqref{eq:mas} is confined to translate of $S$. For any $\xx_0 \in \rrpp^n$, the \df{stoichiometric class of $\xx_0$} is the polyhedron $(\xx_0+S)\cap \rrpp^n$. In the current work, we extend the notion of the stoichiometric subspace to subsets of vertices, usually from the same linkage class.

\begin{defn}
\label{def:stoichsubsp}
    Let $(V, E)$ be a reaction network and $V_0 \subseteq V$. The \df{stoichiometric subspace defined by $V_0$} is the vector space 
        \eq{
            S(V_0) = \Span \{ \yy_j - \yy_i \st  \yy_i, \,\yy_j \in V_0\}.
        }
\end{defn}
If $V_0 = \{ \yy_0,\yy_1,\ldots, \yy_m\}$, then $S(V_0) = \Span \{\yy_j - \yy_0 \st j=1,2,\ldots, m \}$, so that if $V_0$ consists of affinely independent vertices, the set of vectors from a fixed vertex to all others forms a basis for $S(V_0)$. Clearly, if $V_0 \subseteq V_1$, then $S(V_0) \subseteq S(V_1)$. If $V_0$ defines a linkage class of $G$, then $S(V_0)$, the stoichiometric subspace \emph{of the linkage class}, captures the geometry of this connected component. Furthermore, the stoichiometric subspace \emph{of the network} is the vector sum of the stoichiometric subspaces of the linkage classes. In particular, if $G$ has a single linkage class, then $S(V) = S$. However, if $G$ has multiple linkage class, $S$ may be a proper subspace of $S(V)$.

While weak reversibility is a property of a reaction network, it has dynamical implications. For example, every weakly reversible mass-action system has at least one positive steady state within every stoichiometric class~\cite{Boros2019}; they are also conjectured to be persistent and permanent, with certain cases proven, e.g., when $n = 2$~\cite{CraciunNazarovPantea2013_GAC}, or when $\dim S = 2$ with all trajectories bounded~\cite{Pantea2012_GAC}, or when there is only one linkage class~\cite{BorosHofbauer2019_GAC, Anderson2011_GAC}. Weak reversibility is also necessary for complex-balancing, known for their asymptotic stability and conjectured to be globally stable~\cite{Horn1974_GAC, Craciun2019_GAC-inclusion}. By definition, a positive state $\xx$ is a \df{complex-balanced steady state} of a mass-action system $(G, \vv\kk)$ if at every $\yy_i \in V$, we have 
    \eq{ 
        \sum_{\yy_i \to \yy_j \in E}\!\! \kk_{ij} \xx^{\yy_i} 
        = \sum_{\yy_j \to \yy_i \in E}\!\! \kk_{ji} \xx^{\yy_j} . 
    }
The above equation can be interpreted as balancing the fluxes across the vertex $\yy_i$. Once a mass-action system admits a complex-balanced steady state, all of its positive steady states are complex-balanced~\cite{HornJackson1972}; therefore we call such a mass-action system a \df{complex-balanced system}. Because every stoichiometric class has exactly one complex-balanced steady state, the set of positive steady states has dimension equal to $\dim S^\perp$~\cite{HornJackson1972}.

Not every weakly reversible mass-action system is complex-balanced. In general, for a system to be complex-balanced, the rate constants should satisfy some algebraic constraints, the number of which is measured by a non-negative integer called the deficiency~\cite{CraciunDickensteinShiuSturmfels2009}.  

\begin{defn}
\label{def:deficiency}
    Let $(V,E)$ be a reaction network with $\ell$ linkage classes and stoichiometric subspace $S$. The \df{deficiency} of the network is the integer $\delta = |V| - \ell - \dim S$. 
\end{defn}

One can also consider the \df{deficiency of a linkage class} given by $V_i \subseteq V$, defined as $\delta_i = |V_i| - 1 - \dim S(V_i)$. It is easy to see that 
    \eq{ 
        \delta \geq \sum_{i=1}^\ell \delta_i,
    }
with equality if and only if the stoichiometric subspaces of the linkage classes $\{S(V_i)\}_{i=1}^\ell$ are linearly independent. If $\delta = 0$, then necessarily $\delta_i = 0$ for all $i=1$, $2,\ldots, \ell$.

When the deficiency of a weakly reversible reaction network is zero, then for any choice of positive rate constants, the mass-action system is complex-balanced~\cite{horn1972necessary, feinberg1972complex}. It then follows that within every stoichiometric class, the associated dynamical system \eqref{eq:mas} has a unique positive steady state, which is linearly stable~\cite{HornJackson1972, Johnston_note}. The deficiency is a property of the reaction network, not of the associated dynamical system, yet in the case of deficiency zero, it has strong implications on the dynamics under mass-action kinetics. In this work, we are interested in weakly reversible and deficiency zero reaction networks, which we refer to as a \df{$\textbf{WR}_\textbf{0}$ network}.  

The associated dynamical system \eqref{eq:mas} is uniquely defined by the network and its rate constants; however, different reaction networks can give rise to the same system of differential equations under mass-action kinetics~\cite{CraciunPantea2008}. Dynamical equivalence captures the notion when different mass-action systems (networks with their rate constants) have the same associated dynamical system, which occurs if and only if the net reaction vectors coincide.

\begin{defn}
\label{def:DE}
    Two mass-action systems $(G,\vv\kk)$ and $(G', \vv\kk')$ are said to be \df{dynamically equivalent} if for all $\yy_i \in V \cup V'$, we have 
    \eqn{ \label{eq:DE} 
        \sum_{\yy_i \to \yy_j \in E} \!\! \kk_{ij} (\yy_j - \yy_i) 
        = \sum_{\yy_i \to \yy_j \in E'} \!\! \kk'_{ij} (\yy_j - \yy_i).
    }
    We call such mass-action systems \df{realizations} of the associated dynamical system. 
\end{defn}
The sums above may be over an empty set, for example on the left-hand side of \eqref{eq:DE} if $\yy_i \not\in V$; in such cases, we take the sum to be the zero vector. This demonstrates that a realization may contain as many vertices as one would like, as long as it includes those corresponding to the monomials that appear in the dynamical system. 

Finally, consider when only a subset of vertices satisfies \eqref{eq:DE}.

\begin{defn}
\label{def:DEsubset}
    Let $(G,\vv\kk)$ and $(G', \vv\kk')$ be mass-action systems and $V_0 \subseteq V \cup V'$. The mass-action systems are said to be \df{dynamically equivalent on $V_0$} if for all $\yy_i \in V_0$, we have 
    \eq{ 
        \sum_{\yy_i \to \yy_j \in E} \!\! \kk_{ij} (\yy_j - \yy_i) 
        = \sum_{\yy_i \to \yy_j \in E'} \!\! \kk'_{ij} (\yy_j - \yy_i).
    }
\end{defn}

\section{On unique weakly reversible, deficiency zero realization}

The goal of this paper is to prove that for an associated dynamical system of a mass-action system, there is at most one weakly reversible, deficiency zero realization, which we call a {\WRz\ realization}. This problem has been solved when there is only one linkage class~\cite{Csercsik2011ParametricUO, CraciunJohnstonSzederkenyiTonelloTothYu2020}. Here, we consider the problem in full generality.  

In \Cref{sec:nec}, we demonstrate the necessity of weak reversibility and deficiency zero through examples. Then in \Cref{sec:lem}, we establish some restrictions on the network structure of a \WRz\ realization. Finally, we prove that weak reversibility and deficiency zero are sufficient for a unique realization in \Cref{sec:thm}.

\subsection{Necessary conditions for uniqueness of \WRz\ realization} 
\label{sec:nec}

To show that weak reversibility and deficiency zero are necessary for uniqueness, we consider three examples. The first, shown in \Cref{fig:NotWR}, involves two realizations with zero deficiency but are not weakly reversible. In \Cref{fig:NotD0-1,fig:NotD0-2} are weakly reversible and dynamically equivalent systems but (some) have positive deficiencies. \Cref{fig:NotD0-1} illustrates when deficiency is positive because of an affinely \emph{dependent} linkage class, while \Cref{fig:NotD0-2} illustrates when it is because of linear \emph{dependence} between stoichiometric subspaces of linkage classes. 


\begin{ex}
\label{ex:NotWR}
The dynamically equivalent systems with zero deficiency shown in \Cref{fig:NotWR} share the same associated dynamical system, namely, 
    \eq{ 
        \dot{x} &= 1 + 2y^2, \\
        \dot{y} &= 1 - 2y^2. 
    }
Neither of the mass-action systems is weakly reversible, demonstrating that weak reversibility is necessary for a unique \WRz\ realization. Note that the monomials appearing in the associated dynamical system must be source vertices in the realization.
\end{ex}

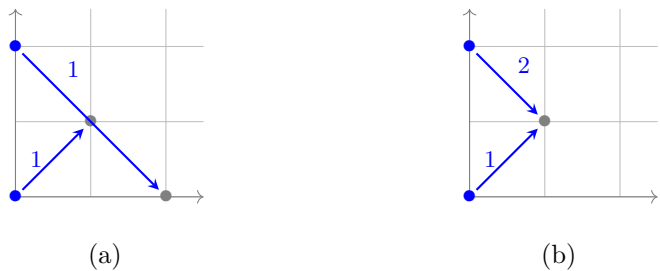
\begin{figure}[h!tbp]
	\centering
	\begin{subfigure}[b]{0.35\textwidth}
	\centering 
		\begin{tikzpicture}
        \draw [step=1, gray!50!white, thin] (0,0) grid (2.5,2.5);
        \node at (0,2.75) {};
		\node at (0,-0.25) {};
            \draw [->, gray] (0,0)--(2.5,0);
            \draw [->, gray] (0,0)--(0,2.5);
            \node [inner sep=0pt] (0) at (0,0) {\blue{$\bullet$}};
            \node [inner sep=0pt] (2y) at (0,2) {\blue{$\bullet$}};
            \node [inner sep=0pt, gray] (xy) at (1,1) {{$\bullet$}};
            \node [inner sep=0pt, gray] (2x) at (2,0) {{$\bullet$}};
            \draw [-{stealth}, thick, blue] (0)--(xy) node [midway, left] {\footnotesize $1$};
            \draw [-{stealth}, thick, blue] (2y)--(2x) node [near start, above right] {\footnotesize $1$};
		\end{tikzpicture}
		\caption{}
	\end{subfigure}
		\hspace{0cm}
	\begin{subfigure}[b]{0.35\textwidth}
	\centering
		\begin{tikzpicture}
        \draw [step=1, gray!50!white, thin] (0,0) grid (2.5,2.5);
        \node at (0,2.75) {};
		\node at (0,-0.25) {};
            \draw [->, gray] (0,0)--(2.5,0);
            \draw [->, gray] (0,0)--(0,2.5);
            \node [inner sep=0pt] (0) at (0,0) {\blue{$\bullet$}};
            \node [inner sep=0pt] (2y) at (0,2) {\blue{$\bullet$}};
            \node [inner sep=0pt, gray] (xy) at (1,1) {{$\bullet$}};
            \draw [-{stealth}, thick, blue] (0)--(xy) node [midway, left] {\footnotesize $1$};
            \draw [-{stealth}, thick, blue] (2y)--(xy) node [midway, above right] {\footnotesize $2$};
		\end{tikzpicture}
		\caption{}
	\end{subfigure}	
	\caption{Two dynamically equivalent mass-action systems with zero deficiency, but are not weakly reversible. Source vertices are shown in blue while targets are in gray.}
	\label{fig:NotWR}
\end{figure}


\begin{ex}
\label{ex:NotD0-1}
The dynamically equivalent, weakly reversible mass-action systems shown in \Cref{fig:NotD0-1} share the associated dynamical system 
    \eq{ 
        \dot{x} &= 2 - 2x^2y^2, 
        \\
        \dot{y} &= 2 - 2x^2y^2. 
    }
The network in \Cref{fig:NotD0-1b} has positive deficiency, demonstrating that $\delta = 0$ is necessary for a unique \WRz\ realization. The vertices in \Cref{fig:NotD0-1b} are \emph{not} affinely independent. As we shall see in \Cref{thm:def0}, this is one of two causes for a positive deficiency. 
\end{ex}

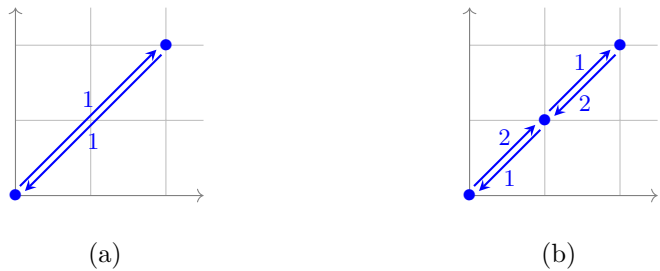
\begin{figure}[h!tbp]
	\centering
	\begin{subfigure}[b]{0.35\textwidth}
	\centering 
		\begin{tikzpicture}
        \draw [step=1, gray!50!white, thin] (0,0) grid (2.5,2.5);
        \node at (0,2.75) {};
		\node at (0,-0.25) {};
            \draw [->, gray] (0,0)--(2.5,0);
            \draw [->, gray] (0,0)--(0,2.5);
            \node [inner sep=0pt] (0) at (0,0) {\blue{$\bullet$}};
            \node [inner sep=0pt] (2x2y) at (2,2) {\blue{$\bullet$}};
            \draw [-{stealth}, thick, blue, transform canvas={xshift=-0.2ex, yshift=0.2ex}] (0)--(2x2y) node [midway, above] {\footnotesize $1$};
            \draw [-{stealth}, thick, blue, transform canvas={xshift=0.2ex, yshift=-0.2ex}] (2x2y)--(0) node [midway, below] {\footnotesize $1$};
		\end{tikzpicture}
		\caption{}
	\end{subfigure}
		\hspace{0cm}
	\begin{subfigure}[b]{0.35\textwidth}
	\centering
		\begin{tikzpicture}
        \draw [step=1, gray!50!white, thin] (0,0) grid (2.5,2.5);
        \node at (0,2.75) {};
		\node at (0,-0.25) {};
            \draw [->, gray] (0,0)--(2.5,0);
            \draw [->, gray] (0,0)--(0,2.5);
            \node [inner sep=0pt] (0) at (0,0) {\blue{$\bullet$}};
            \node [inner sep=0pt] (xy) at (1,1) {\blue{$\bullet$}};
            \node [inner sep=0pt] (2x2y) at (2,2) {\blue{$\bullet$}};
            \draw [-{stealth}, thick, blue, transform canvas={xshift=-0.2ex, yshift=0.2ex}] (0)--(xy) node [midway, above] {\footnotesize $2$};
            \draw [-{stealth}, thick, blue, transform canvas={xshift=0.2ex, yshift=-0.2ex}] (xy)--(0) node [midway, below] {\footnotesize $1$};
            \draw [-{stealth}, thick, blue, transform canvas={xshift=-0.2ex, yshift=0.2ex}] (xy)--(2x2y) node [midway, above] {\footnotesize $1$};
            \draw [-{stealth}, thick, blue, transform canvas={xshift=0.2ex, yshift=-0.2ex}] (2x2y)--(xy) node [midway, below] {\footnotesize $2$};
		\end{tikzpicture}
		\caption{}
		\label{fig:NotD0-1b}
	\end{subfigure}	
	\caption{Two dynamically equivalent mass-action systems that are weakly reversible, but the deficiency of the network in (b) is positive.}
	\label{fig:NotD0-1}
\end{figure}

\begin{ex}
Finally, the dynamically equivalent, weakly reversible mass-action systems shown in \Cref{fig:NotD0-2} share the associated dynamical system  
    \eq{ 
        \dot{x} &= 2 - x + x^2 - 2x^3. 
    }
However, both networks have a positive deficiency, demonstrating that $\delta = 0$ is necessary for a unique realization. In \Cref{fig:NotD0-2a}, the stoichiometric subspaces of the linkage classes are \emph{not} linearly independent. As we shall see in \Cref{thm:def0}, this is another cause for positive deficiency. 
\end{ex}

\begin{figure}[h!tbp]
	\centering
	\begin{subfigure}[b]{0.35\textwidth}
	\centering 
		\begin{tikzpicture}
		\draw [step=1, gray!50!white, thin] (0,-0.3) grid (3.5,0.3);
		\node at (0,0.55) {};
		\node at (0,-0.5) {};
            \draw [->, gray] (0,0)--(3.75,0);
            \draw [gray] (0,-0.33)--(0,0.33);
            \node [inner sep=0pt] (0) at (0,0) {\blue{$\bullet$}};
            \node [inner sep=0pt] (x) at (1,0) {\blue{$\bullet$}};
            \node [inner sep=0pt] (2x) at (2,0) {\blue{$\bullet$}};
            \node [inner sep=0pt] (3x) at (3,0) {\blue{$\bullet$}};
            \draw [-{stealth}, thick, blue, transform canvas={yshift=0.31ex}] (0)--(x) node [midway, above] {\footnotesize $2$};
            \draw [-{stealth}, thick, blue, transform canvas={yshift=-0.31ex}] (x)--(0) node [midway, below] {\footnotesize $1$};
            \draw [-{stealth}, thick, blue, transform canvas={yshift=-0.31ex}] (3x)--(2x) node [midway, below] {\footnotesize $2$};
            \draw [-{stealth}, thick, blue, transform canvas={yshift=0.31ex}] (2x)--(3x) node [midway, above] {\footnotesize $1$};
		\end{tikzpicture}
		\caption{}
		\label{fig:NotD0-2a}
	\end{subfigure}
		\hspace{0cm}
	\begin{subfigure}[b]{0.35\textwidth}
	\centering
		\begin{tikzpicture}
        \draw [step=1, gray!50!white, thin] (0,-0.3) grid (3.5,0.3);
		\node at (0,0.55) {};
		\node at (0,-0.5) {};
            \draw [->, gray] (0,0)--(3.75,0);
            \draw [gray] (0,-0.33)--(0,0.33);
            \node [inner sep=0pt] (0) at (0,0) {\blue{$\bullet$}};
            \node [inner sep=0pt] (x) at (1,0) {\blue{$\bullet$}};
            \node [inner sep=0pt] (2x) at (2,0) {\blue{$\bullet$}};
            \node [inner sep=0pt] (3x) at (3,0) {\blue{$\bullet$}};
            \draw [-{stealth}, thick, blue, transform canvas={yshift=0.7ex}] (0)--(2x) node [midway, above] {\footnotesize $1$};
            \draw [-{stealth}, thick, blue, transform canvas={yshift=-0.7ex}] (x)--(0) node [midway, below] {\footnotesize $1$};
            \draw [-{stealth}, thick, blue, transform canvas={yshift=0.7ex}] (2x)--(3x) node [midway, above] {\footnotesize $1$};
            \draw [-{stealth}, thick, blue, transform canvas={yshift=-0.7ex}] (3x)--(x) node [midway, below] {\footnotesize $1$};
		\end{tikzpicture}
		\caption{}
	\end{subfigure}	
	\caption{Two dynamically equivalent mass-action systems that are weakly reversible, but the deficiency of each network is positive.}
	\label{fig:NotD0-2}
\end{figure}
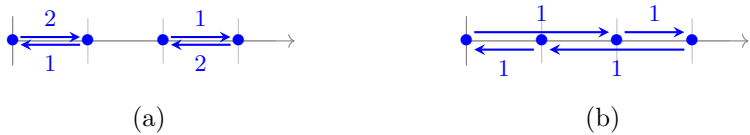

\subsection{Network structure of \WRz\ realizations} 
\label{sec:lem}

There are at least three restrictions on the network structure of a \WRz\ realization. First, deficiency zero can be characterized by affinely independent linkage classes and linearly independent stoichiometric subspaces of the linkage classes. Second, a \WRz\ realization uses the minimal number of vertices, precisely those corresponding to the monomials that appear explicitly in the dynamical system. Finally, if there are two \WRz\ realizations, they must have the same number of linkage classes. We address each of these assertions below.

If the deficiency $\delta$ of the network is zero, then the deficiency $\delta_i$ of each linkage class is also zero, and $\delta = \sum_i \delta_i$. The former implies that the stoichiometric subspace of the linkage class $S_i$ is of dimension $|V_i| - 1$, i.e., the vertices in $V_i$ are affinely independent. The latter implies that the stoichiometric subspaces of the linkage classes are linearly independent. Therefore, we have the following theorem.

\begin{thm}[\protect{\cite{feinberg2019foundations, CraciunJohnstonSzederkenyiTonelloTothYu2020}}]
\label{thm:def0}
    The deficiency of a reaction network is zero if and only if
    \begin{enumerate}[label={(\roman*)}]
    \item the network has affinely independent linkage classes, and
    \item the stoichiometric subspaces of the linkage classes are linearly independent.
    \end{enumerate}                                
\end{thm}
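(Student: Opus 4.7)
The plan is to write the global deficiency as a sum of non-negative quantities, each of which vanishes precisely under one of the two stated conditions. Partition $V$ into its $\ell$ linkage classes $V_1,\ldots,V_\ell$, so that $|V|=\sum_{i=1}^{\ell}|V_i|$. Recall $S=\sum_{i=1}^{\ell}S(V_i)$ (as a sum of subspaces), since every reaction vector lies in exactly one $S(V_i)$ and the $S(V_i)$'s together span $S$. Let $\delta_i=|V_i|-1-\dim S(V_i)$ be the deficiency of the $i$-th linkage class.

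Starting from $\delta=|V|-\ell-\dim S$, I would substitute and regroup:
\begin{align*}
\delta \;=\; \sum_{i=1}^{\ell}\bigl(|V_i|-1\bigr)-\dim S
\;=\; \sum_{i=1}^{\ell}\delta_i\;+\;\Bigl(\sum_{i=1}^{\ell}\dim S(V_i)-\dim S\Bigr).
\end{align*}
Both addenda are non-negative: each $\delta_i\geq 0$ because the vertices of $V_i=\{\yy_0^{(i)},\ldots,\yy_{m_i}^{(i)}\}$ give at most $m_i=|V_i|-1$ linearly independent vectors $\yy_j^{(i)}-\yy_0^{(i)}$ spanning $S(V_i)$, and the bracketed term is non-negative because the dimension of a sum of subspaces is at most the sum of their dimensions. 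Therefore $\delta=0$ if and only if both summands vanish simultaneously.

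Next, I would identify each vanishing condition with the corresponding clause of the theorem. The quantity $\delta_i=0$ is equivalent to $\dim S(V_i)=|V_i|-1$, i.e., the $|V_i|-1$ vectors $\yy_j^{(i)}-\yy_0^{(i)}$ are linearly independent; by \Cref{def:affindep} this is exactly affine independence of the linkage class $V_i$. So $\sum_i\delta_i=0$ iff all linkage classes are affinely independent, which is condition~(i). On the other hand, the identity $\dim(\sum_i S(V_i))=\sum_i\dim S(V_i)$ is precisely the definition of the subspaces $S(V_1),\ldots,S(V_\ell)$ being linearly independent, which is condition~(ii). Combining the two, $\delta=0$ iff both (i) and (ii) hold.

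I do not anticipate a significant technical obstacle: the proof is essentially a bookkeeping identity together with two standard characterizations of independence (affine independence in terms of difference vectors, and linear independence of subspaces in terms of dimensions). The only care needed is to verify that $S=\sum_i S(V_i)$ as used above, which is immediate from the fact that each reaction $\yy_i\to\yy_j\in E$ has both endpoints in the same linkage class, so $\yy_j-\yy_i$ lies in $S(V_k)$ for the appropriate $k$.
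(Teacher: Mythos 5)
Your proof is correct and is essentially the argument the paper itself gives: the decomposition $\delta=\sum_i\delta_i+\bigl(\sum_i\dim S(V_i)-\dim S\bigr)$ into two non-negative terms, with the first vanishing iff the linkage classes are affinely independent and the second iff the subspaces $S(V_i)$ are linearly independent, is exactly the content of the inequality $\delta\geq\sum_i\delta_i$ stated in Section 2 and the paragraph preceding the theorem. The one point you state slightly incompletely is the identity $S=\sum_i S(V_i)$: your justification only yields $S\subseteq\sum_i S(V_i)$, whereas the reverse inclusion $S(V_k)\subseteq S$ (needed for the ``if'' direction, to upgrade $\dim S\leq\sum_i\dim S(V_i)$ to equality under condition~(ii)) follows because each linkage class is connected, so any difference of two of its vertices is a signed telescoping sum of reaction vectors along a path.
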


When it comes to weakly reversible realizations, any vertices for which the net reaction vector is zero can be removed while maintaining dynamical equivalence~\cite{CraciunJinYu2019}. Indeed, a weakly reversible realization exists if and only if one exists using only the vertices that appear in the monomials of the differential equations (after simplification). Moreover, for any additional vertex not coming from the monomials, the deficiency increases by one. 

\begin{prop}[\protect{\cite[Theorems 4.8 and 4.12]{CraciunJinYu2019}}]
\label{prop:V}
The vertices of any $\textit{WR}_\textit{0}$ realization of $\dot{\xx} = \vv f(\xx)$ are precisely the exponents in the monomials of $\vv f(\xx)$ after simplification. 
\end{prop}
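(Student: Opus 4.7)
The proof proposal is to establish the two containments separately. Writing $\vv f(\xx) = \sum_{\yy} c_\yy \xx^\yy$ for the simplified right-hand side and $V$ for the vertex set of a putative \WRz\ realization, the first direction to show is that every monomial exponent $\yy$ with $c_\yy \neq \vv 0$ must lie in $V$. This follows from the linear independence of the monomial functions $\xx^\yy$ on $\rrpp^n$: in the expansion of the mass-action right-hand side from the realization, the coefficient of $\xx^\yy$ is exactly the net reaction vector from $\yy$, which by convention is $\vv 0$ when $\yy \notin V$. Dynamical equivalence forces this coefficient to equal $c_\yy$, so $c_\yy \neq \vv 0$ implies $\yy \in V$.

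For the converse, given a vertex $\yy \in V$, the plan is to show that the net reaction vector from $\yy$ is non-zero; by the same dynamical equivalence argument this gives $c_\yy \neq \vv 0$. Let $V_j$ be the linkage class containing $\yy$. By \Cref{thm:def0}, deficiency zero ensures that (i) the vertices of $V_j$ are affinely independent, and (ii) the stoichiometric subspaces $\{S(V_k)\}$ of the linkage classes are linearly independent. From (i), fixing any $\yy_0 \in V_j$, the vectors $\{\yy' - \yy_0 : \yy' \in V_j \setminus \{\yy_0\}\}$ form a basis for $S(V_j)$. Expanding the net reaction vector from $\yy$ in this basis, the coefficients are, up to signs, the outgoing rate constants from $\yy$ together with their sum (the precise expression differs according to whether $\yy = \yy_0$ or $\yy \neq \yy_0$). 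Weak reversibility forces $\yy$ to have at least one outgoing edge, so at least one such rate constant is positive and the expansion has a non-zero coefficient. Hence the net reaction vector from $\yy$ is non-zero in $S(V_j)$, and by (ii) it remains non-zero in the ambient stoichiometric subspace $S$.

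I do not expect any step to present a serious obstacle. The bookkeeping required to express the net reaction vector in the affine basis is routine, and the two hallmark consequences of deficiency zero supplied by \Cref{thm:def0} are exactly what make both the coordinate computation within a linkage class and the lift back to the ambient $S$ work. The only care needed is to handle the cases $\yy = \yy_0$ and $\yy \neq \yy_0$ separately when reading off the basis coefficients, but in both cases weak reversibility supplies a positive rate constant that prevents cancellation.
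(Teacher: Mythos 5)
Your argument is correct. Note that the paper does not actually prove this proposition: it is imported from \cite{CraciunJinYu2019} (Theorems 4.8 and 4.12), with only an informal justification in the surrounding text (vertices with zero net reaction vector can be deleted, and each extra vertex raises the deficiency). What you have written is a valid self-contained proof that fleshes out exactly that justification: the forward containment via linear independence of distinct monomials, and the reverse containment by showing each vertex has a non-zero net reaction vector. Two small simplifications are available in the second half. First, you can simply take $\yy_0 = \yy$; then the net reaction vector from $\yy$ is a non-negative combination of the linearly independent vectors $\{\yy' - \yy : \yy' \in V_j \setminus \{\yy\}\}$ with coefficients equal to the outgoing rate constants, at least one of which is positive by weak reversibility, so no case split is needed. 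Second, condition (ii) of \Cref{thm:def0} plays no role: a non-zero vector of $S(V_j)$ is already a non-zero vector of $\rr^n$, so there is nothing to ``lift'' and linear independence of the subspaces $S(V_k)$ is not used. Only affine independence of each linkage class (condition (i)) and weak reversibility are needed.
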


Once the set of vertices is fixed, finding realizations (satisfying certain constraints like weak reversibility, minimal deficiency, or complex-balancing) is relatively simple, and algorithms based on optimization techniques exist. For example, see \cite{JohnstonSiegelSzederkenyi2013, RudanSzederkenyiKatalinPeni2014}, or \cite{Szederkenyi2012_Toolbox} for a MATLAB implementation.


Finally, we remark that the number of linkage classes in any \WRz\ realization is also fixed. Recall that deficiency of a network $G$ is $\delta_G = |V| - \ell_G - \dim S_G$, where $|V|$ is the number of vertices in $G$, $\ell_G$ is the number of linkage classes, and $S_G$ is the stoichiometric subspace. We already noted in \Cref{prop:V} that $|V|$ is the number of distinct monomials in the differential equations after simplification, hence constant between dynamically equivalent \WRz\ realizations. If $(G, \vv\kk)$ and $(G',\vv\kk')$ are two such realizations, they share the same associated dynamical system and thus the same set of positive steady states $Z$, whose codimension is $\dim S_G$~\cite{HornJackson1972}. Therefore, if $\delta_G = \delta_{G'} = 0$, then 
    \eq{ 
        \ell_G = |V| - \dim S_G = |V| - \codim Z = \ell_{G'}.
    }
In other words, the realizations have the same number of linkage classes. 

\begin{prop}\label{prop:ell}
In any $\textit{WR}_\textit{0}$ realization of $\dot{\xx} = \vv f(\xx)$, the number of linkage classes is given by $|V| - \codim Z$, where $|V|$ is the number of distinct monomials in $\vv f(\xx)$ after simplification, and $Z$ is the set of positive steady states. 
\end{prop}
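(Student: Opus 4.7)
The plan is to combine three already-established ingredients: the defining formula for deficiency, the fact (\Cref{prop:V}) that the vertex set of any $\WRz$ realization is determined by the dynamical system, and the Horn--Jackson characterization of the positive steady state locus for complex-balanced systems. All of these are either stated earlier in the paper or are classical, so the proof is essentially a short dimension count.

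First, I would invoke \Cref{prop:V} to observe that if $(G,\vv\kk)$ is a $\WRz$ realization of $\dot\xx=\vv f(\xx)$, then $|V|$ equals the number of distinct monomials of $\vv f$ after simplification. In particular, $|V|$ is a numerical invariant of $\vv f$ alone and does not depend on which $\WRz$ realization is chosen. Next, applying the definition of deficiency (\Cref{def:deficiency}) together with the hypothesis $\delta_G=0$ yields
\eq{
    \ell_G \;=\; |V| - \dim S_G,
}
so it remains to identify $\dim S_G$ with $\codim Z$.

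For this last identification I would use the Deficiency Zero Theorem: since $G$ is weakly reversible with $\delta_G=0$, the mass-action system $(G,\vv\kk)$ is complex-balanced for every choice of positive rate constants. By the Horn--Jackson theorem (cited earlier in \Cref{sec:mas}), each stoichiometric class $(\xx_0+S_G)\cap\rrpp^n$ contains exactly one positive steady state, so the positive steady state set $Z$ meets each translate of $S_G$ in a single point and therefore has dimension $\dim S_G^\perp$ inside $\rrpp^n$. Hence $\codim Z = \dim S_G$, and substituting into the previous display gives $\ell_G = |V| - \codim Z$, independent of the realization.

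There is no real obstacle here beyond being careful with the ambient space in which $\codim Z$ is computed: because $Z$ is the full set of positive steady states (i.e., the union over all stoichiometric classes, not the intersection with a single one), its codimension in $\rrpp^n$ genuinely equals $\dim S_G$, which is what makes the identification clean. Once that is noted, the proof reduces to the two-line computation above.
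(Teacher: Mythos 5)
Your proposal is correct and follows essentially the same route as the paper: fix $|V|$ via \Cref{prop:V}, use $\delta_G = 0$ to write $\ell_G = |V| - \dim S_G$, and identify $\dim S_G$ with $\codim Z$ via complex-balancing and the Horn--Jackson description of the positive steady state set. Your extra remark about the ambient space for $\codim Z$ is a reasonable precaution but does not change the argument.
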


\subsection{Proof of uniqueness of $\textbf{WR}_\textbf{0}$ realization}
\label{sec:thm}

The case for the uniqueness of \WRz\ realization with a single linkage class follows immediately from \Cref{thm:def0}. This case was first proved in \cite{Csercsik2011ParametricUO} using linear algebraic methods; a more geometric proof can be found in \cite{CraciunJohnstonSzederkenyiTonelloTothYu2020}.

\begin{cor} \label{cor:l=1} 
Any weakly reversible, deficiency zero realization with a single linkage class is unique.
\end{cor}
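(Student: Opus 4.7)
The plan is to reduce the claim to the following assertion: once the vertex set and the fact that the single linkage class is affinely independent are in hand, the equation of dynamical equivalence at each source vertex becomes a decomposition of a single vector in a basis of $S(V)$, so the rate constants are forced. Let $(G,\vv\kk)$ and $(G',\vv\kk')$ be two \WRz\ realizations of $\dot{\xx} = \vv f(\xx)$, each with a single linkage class.

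First, I would pin down that both realizations share the same vertex set. By \Cref{prop:V}, the vertex sets $V$ and $V'$ are each equal to the set of exponents of the monomials that appear in $\vv f(\xx)$ after simplification, so $V = V'$. (As a sanity check, \Cref{prop:ell} also confirms that both realizations have the same number of linkage classes, namely one.) Next, since both networks are weakly reversible with $\delta = 0$ and a single linkage class, \Cref{thm:def0} gives that $V$ is an affinely independent set of points in $\rr^n$. Consequently, for any fixed $\yy_i \in V$, the collection of reaction vectors $\{\yy_j - \yy_i : \yy_j \in V,\ j \neq i\}$ is a linearly independent basis for $S(V)$.

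Now I would invoke dynamical equivalence on each source vertex. Using the convention $\kk_{ij} = 0$ whenever $\yy_i \to \yy_j$ is not an edge, \Cref{def:DE} gives, for every $\yy_i \in V$,
\eq{
    \sum_{\yy_j \in V,\, j \neq i} (\kk_{ij} - \kk'_{ij})\,(\yy_j - \yy_i) = \vv 0.
}
Because $\{\yy_j - \yy_i : j \neq i\}$ is linearly independent, every coefficient must vanish, i.e.\ $\kk_{ij} = \kk'_{ij}$ for all $j \neq i$. Ranging over $\yy_i \in V$, this equality of rate constants across all ordered pairs of distinct vertices shows $E = E'$ (an edge is present precisely when the corresponding rate constant is positive) and $\vv\kk = \vv\kk'$. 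Hence $(G,\vv\kk) = (G',\vv\kk')$, proving uniqueness.

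The step I expect to be the main substantive point is the linear-independence argument just above; everything else is bookkeeping supplied by \Cref{prop:V}, \Cref{prop:ell}, and \Cref{thm:def0}. There is no real obstacle beyond recognizing that deficiency zero plus a single linkage class forces affine independence of $V$, which in turn makes the net reaction vector at each $\yy_i$ have a \emph{unique} expansion in the basis $\{\yy_j - \yy_i\}_{j \neq i}$ of $S(V)$; this uniqueness is what collapses the apparent freedom in choosing rate constants.
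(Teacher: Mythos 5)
Your proof is correct and follows essentially the same route as the paper's: both reduce to the observation that affine independence of the single linkage class (via \Cref{thm:def0}) forces the coefficients in $\sum_{j\neq i}(\kk_{ij}-\kk'_{ij})(\yy_j-\yy_i)=\vv 0$ to vanish. Your explicit appeal to \Cref{prop:V} to justify $V=V'$ is a detail the paper leaves implicit, but the argument is the same.
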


\begin{proof}
    Suppose that two \WRz\ realizations $(G, \vv\kk)$ and $(G', \vv\kk')$ are dynamically equivalent, i.e., for every $\yy_i \in V = V'$, we have
    \eq{ 
        \sum_{\yy_j \neq \yy_i} \kk_{ij} (\yy_j - \yy_i) 
        = \sum_{\yy_j \neq \yy_i} \kk'_{ij} (\yy_j - \yy_i) ,
    }
where $\kk_{ij}$, $\kk'_{ij} \geq 0$ are the appropriate rate constants from the realizations (which is set to zero if no edge is present). Because the vertices are affinely independent, the only solution to the linear equation
    \eq{ 
        \sum_{\yy_j \neq \yy_i} \left(\kk_{ij} - \kk'_{ij}\right)  (\yy_j - \yy_i) = \vv 0
    }
is $\kk_{ij} = \kk'_{ij}$ for all $i \neq j$. This is true for all edges with source vertex $\yy_i$. Hence, $(G,\vv\kk) = (G', \vv\kk')$.
\end{proof}

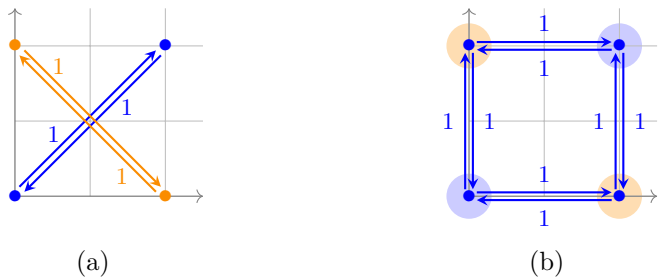
\begin{figure}[h!tbp]
	\centering
	\begin{subfigure}[b]{0.35\textwidth}
	\centering 
		\begin{tikzpicture}
		\draw [-{stealth}, thick, opacity=0, transform canvas={ yshift=-0.31ex}] (2x)--(0) node [midway, below] {\footnotesize $1$}; 
        \draw [-{stealth}, thick, opacity=0, transform canvas={ xshift=-0.31ex}] (0)--(2y) node [midway, left] {\footnotesize $1$};
        \draw [step=1, gray!50!white, thin] (0,0) grid (2.5,2.5);
        \node at (0,2.75) {};
		\node at (0,-0.25) {};
            \draw [->, gray] (0,0)--(2.5,0);
            \draw [->, gray] (0,0)--(0,2.5);
            \node [inner sep=0pt, blue] (0) at (0,0) {$\bullet$};
            \node [inner sep=0pt, blue] (2x2y) at (2,2) {$\bullet$};
            \draw [-{stealth}, thick, blue, transform canvas={xshift=-0.2ex, yshift=0.2ex}] (0)--(2x2y) node [near start, above] {\footnotesize $1$};
            \draw [-{stealth}, thick, blue, transform canvas={xshift=0.2ex, yshift=-0.2ex}] (2x2y)--(0) node [near start, below] {\footnotesize $1$};
            \node [inner sep=0pt, orange] (xy) at (2,0) {$\bullet$};
            \node [inner sep=0pt, orange] (2y) at (0,2) {$\bullet$};
            \draw [-{stealth}, thick, orange, transform canvas={xshift=0.2ex, yshift=0.2ex}] (2y)--(xy) node [near start, above] {\footnotesize $1$};
            \draw [-{stealth}, thick, orange, transform canvas={xshift=-0.2ex, yshift=-0.2ex}] (xy)--(2y) node [near start, below] {\footnotesize $1$};
		\end{tikzpicture}
		\caption{}
		\label{fig:contain-a}
	\end{subfigure}
		\hspace{0cm}
	\begin{subfigure}[b]{0.35\textwidth}
	\centering
		\begin{tikzpicture}
        \draw [fill = orange!30!white, draw opacity=0] (0,2) circle (0.3);
        \draw [fill = orange!30!white, draw opacity=0] (2,0) circle (0.3);
        \draw [fill = blue!20!white, draw opacity=0] (0,0) circle (0.3);
        \draw [fill = blue!20!white, draw opacity=0] (2,2) circle (0.3);
        \draw [step=1, gray!50!white, thin] (0,0) grid (2.5,2.5);
        \node at (0,2.75) {};
		\node at (0,-0.25) {};
            \draw [->, gray] (0,0)--(2.5,0);
            \draw [->, gray] (0,0)--(0,2.5);
            \node [inner sep=0pt, blue] (xy) at (2,0) {$\bullet$};
            \node [inner sep=0pt, blue] (2y) at (0,2) {$\bullet$};
            \node [inner sep=0pt, blue] (0) at (0,0) {$\bullet$};
            \node [inner sep=0pt, blue] (2x2y) at (2,2) {$\bullet$};
            \draw [-{stealth}, thick, blue, transform canvas={ yshift=0.31ex}] (0)--(2x) node [midway, above] {\footnotesize $1$};
            \draw [-{stealth}, thick, blue, transform canvas={ yshift=-0.31ex}] (2x)--(0) node [midway, below] {\footnotesize $1$}; 
            \draw [-{stealth}, thick, blue, transform canvas={ yshift=0.31ex}] (2y)--(2x2y) node [midway, above] {\footnotesize $1$};
            \draw [-{stealth}, thick, blue, transform canvas={ yshift=-0.31ex}] (2x2y)--(2y) node [midway, below] {\footnotesize $1$}; 
            \draw [-{stealth}, thick, blue, transform canvas={ xshift=-0.31ex}] (0)--(2y) node [midway, left] {\footnotesize $1$};
            \draw [-{stealth}, thick, blue, transform canvas={ xshift=0.31ex}] (2y)--(0) node [midway, right] {\footnotesize $1$}; 
            \draw [-{stealth}, thick, blue, transform canvas={ xshift=-0.31ex}] (2x)--(2x2y) node [midway, left] {\footnotesize $1$};
            \draw [-{stealth}, thick, blue, transform canvas={ xshift=0.31ex}] (2x2y)--(2x) node [midway, right] {\footnotesize $1$}; 
		\end{tikzpicture}
		\caption{}
		\label{fig:contain-b}
	\end{subfigure}	
	\caption{Two dynamically equivalent mass-action systems. Each linkage class of (a) is properly contained in a linkage class of (b). Vertices in (b) are highlighted according to the partitioning by the linkage classes of (a).}
	\label{fig:contain}
\end{figure}

In the case of multiple linkage classes, there could (in theory at least) be different ways to partition the vertex set by linkage classes while maintaining dynamical equivalence. For example, the two systems in \Cref{fig:contain} are dynamically equivalent and weakly reversible. While the linkage classes of (a) are 
    \eq{ 
        V_1 \sqcup V_2 = \{ \cf{0}, \, \cf{2X} + \cf{2Y} \} \sqcup \{ \cf{2X}, \, \cf{2Y} \}, 
    }
the network (b) has only one linkage class. Note that $V_1$ is properly contained in the linkage class of (b), which has affinely \emph{dependent} vertices and deficiency one. 


\begin{figure}[h!tbp]
	\centering
	\begin{subfigure}[b]{0.35\textwidth}
	\centering 
		\begin{tikzpicture}
        \draw [step=1, gray!50!white, thin] (0,-0.3) grid (3.5,0.3);
		\node at (0,0.55) {};
		\node at (0,-0.5) {};
            \draw [->, gray] (0,0)--(3.75,0);
            \draw [gray] (0,-0.33)--(0,0.33);
            \node [inner sep=0pt] (0) at (0,0) {\orange{$\bullet$}};
            \node [inner sep=0pt] (x) at (1,0) {\blue{$\bullet$}};
            \node [inner sep=0pt] (2x) at (2,0) {\blue{$\bullet$}};
            \node [inner sep=0pt] (3x) at (3,0) {\orange{$\bullet$}};
            \draw [-{stealth}, thick, orange, transform canvas={yshift=0.7ex}] (0)--(3x) node [near start, above] {\footnotesize $2$\hphantom{\qquad}};
            \draw [-{stealth}, thick, orange, transform canvas={yshift=-0.5ex}] (3x)--(0) node [near start, below] {\footnotesize \hphantom{\qquad}$2$};
            \draw [-{stealth}, thick, blue, transform canvas={yshift=1.2ex}] (x)--(2x) node [midway, above] {\footnotesize $2$};
            \draw [-{stealth}, thick, blue, transform canvas={yshift=-1.2ex}] (2x)--(x) node [midway, below] {\footnotesize $2$};
		\end{tikzpicture}
		\caption{}
		\label{fig:exchange-a}
	\end{subfigure}
		\hspace{0cm}
	\begin{subfigure}[b]{0.35\textwidth}
	\centering 
		\begin{tikzpicture}
		\draw [fill = orange!30!white, draw opacity=0] (0,0) circle (0.3);
		\draw [fill = orange!30!white, draw opacity=0] (3,0) circle (0.3);
        \draw [fill = blue!20!white, draw opacity=0] (1,0) circle (0.3);
        \draw [fill = blue!20!white, draw opacity=0] (2,0) circle (0.3);
        \draw [step=1, gray!50!white, thin] (0,-0.3) grid (3.5,0.3);
		\node at (0,0.55) {};
		\node at (0,-0.5) {};
            \draw [->, gray] (0,0)--(3.75,0);
            \draw [gray] (0,-0.33)--(0,0.33);
            \node [inner sep=0pt] (0) at (0,0) {\orange{$\bullet$}};
            \node [inner sep=0pt] (x) at (1,0) {\blue{$\bullet$}};
            \node [inner sep=0pt] (2x) at (2,0) {\orange{$\bullet$}};
            \node [inner sep=0pt] (3x) at (3,0) {\blue{$\bullet$}};
            \draw [-{stealth}, thick, orange, transform canvas={yshift=0.7ex}] (0)--(2x) node [near start, above] {\footnotesize $3$};
            \draw [-{stealth}, thick, orange, transform canvas={yshift=-0.5ex}] (2x)--(0) node [near end, below] {\footnotesize $1$};
            \draw [-{stealth}, thick, blue, transform canvas={yshift=1.2ex}] (x)--(3x) node [midway, above] {\footnotesize $1$};
            \draw [-{stealth}, thick, blue, transform canvas={yshift=-1.2ex}] (3x)--(x) node [midway, below] {\footnotesize $3$};
		\end{tikzpicture}
		\caption{}
		\label{fig:exchange-b}
	\end{subfigure}
	\caption{Two dynamically equivalent mass-action systems. No linkage class of~(a) is properly contained in any linkage class of~(b), and vice versa. Vertices in~(b) are highlighted according to the partitioning by the linkage classes of~(a).}
	\label{fig:exchange}
\end{figure}
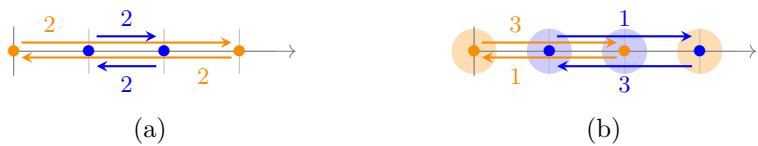

As a second example, consider the dynamically equivalent systems in \Cref{fig:exchange}. Each of the linkage classes of (a), 
    \eq{ 
        V_1 = \{ \cf{0}, \, \cf{3X}  \}  
        \quad \text{and} \quad  
        V_2 = \{ \cf{X}, \, \cf{2X} \},
    }
is split between different linkage classes of (b), with $\cf{0}$ and $\cf{2X}$ belonging to one linkage class, while $\cf{X}$ and $\cf{3X}$ belonging to another. Note that the systems shown in \Cref{fig:exchange} have linkage classes that generate linearly \emph{dependent} stoichiometric subspaces. The two networks also have deficiency one.

The examples in \Cref{fig:contain,fig:exchange} illustrate that, at least in principle, that vertices can be arranged into different linkage classes while maintaining dynamical equivalence.

If the partitioning of vertices by linkage classes are identical between two \WRz\ realizations, treating each linkage class as if it is a mass-action system with one connected component, we can conclude uniqueness from \Cref{cor:l=1}. In the remainder of this section, we prove that the situations in \Cref{fig:contain,fig:exchange} are inconsistent with deficiency zero. More precisely, \Cref{lem:contain} shows that if a linkage class is properly contained in a linkage class of another realization (the situation in \Cref{fig:contain-b}), then the latter linkage class cannot be affinely independent. \Cref{lem:exchange} shows that if a linkage class is split between different linkage classes of another realization (the scenario of \Cref{fig:exchange}), then the stoichiometric subspaces of the latter's linkage classes cannot be linearly independent.

Although the following lemmas repeatedly refer to a reaction network $G$ with one linkage class, we have in mind $G$ as one connected component of a larger reaction network.

\begin{lem}
\label{lem:contain}
    Let $(G,\vv\kk)$ be a mass-action system with one linkage class. Let $(G',\vv\kk')$ be a weakly reversible mass-action system with one linkage class such that $V \subsetneq V'$. If they are dynamically equivalent on $V$, the vertices of $G'$ cannot be affinely independent. 
\end{lem}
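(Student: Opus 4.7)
The plan is to argue by contradiction: assume $V'$ is affinely independent, and derive an impossibility using the weak reversibility and connectedness of $G'$.

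First, I would use the dynamical equivalence on $V$ to constrain the net reaction vectors from $V$-vertices inside $G'$. Because $G$ has a single linkage class with vertex set $V$, the net reaction vector from each $\yy_i \in V$ in the network $G$ lies in $S(V)$. Dynamical equivalence on $V$ then forces the corresponding net reaction vector in $G'$ to lie in $S(V)$ as well. Splitting that sum between edges of $G'$ landing in $V$ and edges landing in $V' \setminus V$, the portion over $V$ already lies in $S(V)$, so the remaining portion
\[
    \sum_{\yy_j \in V' \setminus V} \kk'_{ij}\,(\yy_j - \yy_i)
\]
must lie in $S(V)$ as well.

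Next, I would use the hypothesised affine independence of $V'$ to conclude that this partial sum is in fact zero. Fixing a basepoint $\yy_0 \in V$ and rewriting $\yy_j - \yy_i = (\yy_j - \yy_0) - (\yy_i - \yy_0)$ isolates the coefficients on the vectors $\{\yy_j - \yy_0 : \yy_j \in V' \setminus V\}$. By affine independence of $V'$, these vectors are linearly independent and together with a basis of $S(V)$ form a basis of $S(V')$, giving a direct sum decomposition $S(V') = S(V) \oplus \Span\{\yy_j - \yy_0 : \yy_j \in V' \setminus V\}$. Any vector that simultaneously lies in $S(V)$ and in the second summand must therefore vanish, which forces $\kk'_{ij} = 0$ for every $\yy_i \in V$ and $\yy_j \in V' \setminus V$. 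In other words, $G'$ has no edges from $V$ to $V' \setminus V$.

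Finally, I would close the argument using connectivity and weak reversibility of $G'$. Since $V' \setminus V$ is non-empty and $G'$ has a single linkage class, the underlying undirected graph is connected, so some edge of $G'$ must cross the partition $V \sqcup (V' \setminus V)$; by the previous step, such an edge can only go from $V' \setminus V$ into $V$. Weak reversibility of $G'$ then places this edge on a directed cycle, which must eventually leave $V$ and re-enter $V' \setminus V$ through an edge that we have just forbidden, yielding the desired contradiction.

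The main obstacle I foresee is the linear algebra step: one needs to be precise that affine independence of $V'$ delivers the direct sum decomposition above, since without this decomposition it is not immediate that a non-trivial $\kk'$-weighted combination of vectors pointing from $V$ into $V' \setminus V$ cannot accidentally land in $S(V)$. Once that geometric fact is in hand, the weak reversibility argument is quick.
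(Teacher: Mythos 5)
Your proposal is correct and follows essentially the same route as the paper: both arguments hinge on the fact that affine independence of $V'$ forces every coefficient $\kk'_{ij}$ with $\yy_j \in V'\setminus V$ to vanish in the dynamical-equivalence relation at a vertex of $V$, contradicting the edge from $V$ into $V'\setminus V$ that strong connectivity of $G'$ guarantees. The only difference is organizational --- the paper first picks the crossing edge and then reads off the vanishing coefficients directly from one linear relation, whereas you establish the vanishing at every vertex of $V$ via the direct-sum decomposition $S(V')=S(V)\oplus\Span\{\yy_j-\yy_0 \st \yy_j\in V'\setminus V\}$ and invoke connectivity afterwards --- but the substance is identical.
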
       
\begin{proof}
Since $G'$ is strongly connected and $V \subsetneq V'$, there exists $\yy_0 \in V$ and $\yy'\in V'\setminus V$ such that $\yy_0 \to \yy'\in E'$. The rate constant for this reaction in $(G', \vv\kk')$ is non-zero. Dynamical equivalence at $\yy_0$ demands that 
    \eq{ 
        \vv 0 &= \sum_{\yy'_i \in V'\setminus V} \! \kk'_i (\yy'_i - \yy_0) 
        + \sum_{\yy_i \in V} (\kk'_i - \kk_i) (\yy_i - \yy_0), 
    }
where $\kk_i$, $\kk'_i \geq 0$ are the appropriate rate constants of $\yy_0 \to \yy_i$ in either $G$ or $G'$. If the vertices of $G'$ are affinely independent, the coefficients in the above equation must be zeroes. In particular, $\kk'_i = 0$ whenever $\yy'_i \not\in V$, contradicting the existence of a reaction in $G'$ from $V$ to $V'\setminus V$.  
\end{proof}

Recall that the stoichiometric subspace $S$ of a reaction network $G$ is the linear span of all reaction vectors in $G$. If $G$ has a single linkage class, then $S = S(V)$, where $S(V)$ is the span of vectors that point between vertices. The following lemma characterizes the stoichiometric subspace of a weakly reversible mass-action system $(G,\vv\kk)$ in terms of the net reaction vectors.

\begin{lem}
\label{lem:netvect} 
    Let $(G,\vv\kk)$ be a mass-action system with one linkage class. For each $\yy_i \in V$, let $\vv w_i$ be the net reaction vector from $\yy_i$, and let $S$ be the stoichiometric subspace. Then we have:
    \begin{enumerate}[label={(\roman*)}]
        \item $\Span \{ \vv w_i\}_{i=1}^m \subseteq S$.  
        \item If $G$ is weakly reversible, then $V$ is the set of \emph{source} vertices, and $\Span\{ \vv w_i\}_{i=1}^m = S$. 
        \item If $G$ is weakly reversible and deficiency zero, then any $m-1$ vectors from the set $\{\vv w_i\}_{i=1}^m$ are linearly independent, i.e., the set is a basis for $S$. 
    \end{enumerate}
\end{lem}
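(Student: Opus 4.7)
Part~(i) is essentially unpacking the definition: each $\vv w_i = \sum_j \kk_{ij}(\yy_j - \yy_i)$ is manifestly a linear combination of reaction vectors of $G$, every one of which lies in $S$ by definition of the stoichiometric subspace.

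For part~(ii), I would separate the two claims. The identification of $V$ as the set of source vertices is immediate from weak reversibility, since every vertex lies on an oriented cycle and hence has at least one outgoing edge. For the equality $\Span\{\vv w_i\}_{i=1}^m = S$, the containment $\subseteq$ is given by~(i), and I would establish the reverse containment by duality. Suppose $\vv v \in \rr^n$ satisfies $\vv v \cdot \vv w_i = 0$ for every $i$, and set $a_i := \vv v \cdot \yy_i$; the orthogonality conditions read $\sum_j \kk_{ij}(a_j - a_i) = 0$ at every $\yy_i$, equivalently $a_i = \sum_j p_{ij}\, a_j$ with $p_{ij} := \kk_{ij}/\sum_{j'} \kk_{ij'}$ the transition probabilities of an irreducible Markov chain on the strongly connected graph~$G$. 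A finite-state maximum principle (or Perron--Frobenius) then forces $a$ to be constant, so $\vv v \cdot (\yy_j - \yy_i) = 0$ for all $i,j$, i.e., $\vv v \in S(V)^\perp = S^\perp$ (the last equality uses the single-linkage-class hypothesis). I expect this duality step to carry the main weight of the argument; the rest of the lemma is essentially bookkeeping around it.

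For part~(iii), I would combine~(ii) with complex-balancing. By \Cref{thm:def0}, deficiency zero with one linkage class forces $\yy_1, \ldots, \yy_m$ to be affinely independent, so $\dim S = m-1$; together with~(ii), the $m$ vectors $\{\vv w_i\}_{i=1}^m$ span an $(m-1)$-dimensional space, so the space of linear relations among them is one-dimensional in $\rr^m$. The Deficiency Zero Theorem supplies a complex-balanced steady state $\xx^* \in \rrpp^n$, and evaluating the associated dynamical system at $\xx^*$ yields the explicit relation $\sum_{i=1}^m (\xx^*)^{\yy_i}\, \vv w_i = \vv 0$ with \emph{every} coefficient strictly positive. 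Since the one-dimensional space of relations contains a vector whose entries are all nonzero, every nontrivial relation must have all coefficients nonzero; consequently, no proper subfamily $\{\vv w_i\}_{i \neq k}$ supports a relation, and any such $(m-1)$-subfamily is linearly independent, hence a basis of~$S$.
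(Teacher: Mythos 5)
Your proposal is correct and follows essentially the same route as the paper: part (ii) is the paper's maximum-principle argument on the strongly connected component (you phrase it via harmonic functions of an irreducible Markov chain, the paper via the set $V_{\max}$ of vertices maximizing $\vv v\cdot\yy_i$ and an edge leaving it), and part (iii) is the paper's argument verbatim in structure — a rank count showing the relation space is one-dimensional, plus a strictly positive steady state (the paper invokes existence for weakly reversible systems, you invoke the Deficiency Zero Theorem; both suffice) producing an all-positive spanning relation. No gaps.
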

\begin{proof}
By definition, a net reaction vector is  
    \eq{ 
        \vv w_i = \sum_{\yy_j \in V} \kk_{ij} (\yy_j - \yy_i), 
    }
where $\kk_{ij} > 0$ if $\yy_i \to \yy_j \in E$ and $\kk_{ij} = 0$ otherwise. This in turn implies that each $\vv w_i$ is a vector in the stoichiometric subspace $S$. Thus, $\Span \{ \vv w_i\}_{i=1}^m \subseteq S$.
    
Suppose $G$ is weakly reversible, and suppose for a contradiction that the net reaction vectors do not span all of $S$, i.e., $W = \Span \{ \vv w_i\}_{i=1}^m \subsetneq S$. Then there exists a non-zero vector $\vv v \in S$ that is perpendicular to $W$. Since $\vv v \neq \vv 0$, there exists a reaction $\yy_i \to \yy_j \in E$ such that $\vv v \cdot (\yy_j - \yy_i) \neq 0$. In particular, the set $\{ \vv v \cdot \yy_i \}_{i=1}^m$ has at least two different numbers. Let $V_{\max} = \{ \yy_i \in V \st \vv v \cdot \yy_i = \max_j \vv v \cdot \yy_j\}$ be the subset of vertices which maximizes the dot product. Weak reversibility implies that there exists an edge from a vertex in $V_{\max}$ to a  vertex not in it. Without loss of generality, let $\yy_1 \to \yy_2$ be this edge, where $\yy_1 \in V_{\max}$. Note that for all $i=1$, $2,\ldots, m$, we have $\vv v \cdot (\yy_i - \yy_1) \leq 0$, so
    \eq{ 
        \vv v \cdot \vv w_1 
        &= \sum_{\yy_j \in V} \kk_{1j} \vv v \cdot (\yy_j - \yy_1) \leq \kk_{12} \vv v \cdot (\yy_2 - \yy_1) < 0. 
    }
In other words, $\vv v$ is not perpendicular to $\vv w_1$, contradicting our assumption that $\vv v$ is perpendicular to $W$. 

Finally, suppose that $G$ is \WRz. Let $\mm W \in \rr^{n\times m}$ be the matrix with $\vv w_i$ as its $i$th column. By (ii) of the lemma, the range of $\mm W$ is the stoichiometric subspace $S$, which is of dimension $m - 1$ because $\delta = 0 = m - 1 - \dim S$. In other words, the rank of $\mm W$ is $m-1$, thus the dimension of $\ker \mm W$ is 1.

Since $G$ is weakly reversible, the mass-action system $(G,\vv\kk)$ admits a positive steady state $\xx$~\cite{Boros2019}. Rearranging the steady state equation by first summing over the vertices, we obtain the following liner equation involving the net reaction vectors:
    \eq{ 
        \vv 0 
        &= \sum_{\yy_i \in V} \xx^{\yy_i} \!\!\!\sum_{\yy_i \to \yy_j \in E} \kk_{ij} (\yy_j - \yy_i) 
        = \sum_{\yy_i \in V} \xx^{\yy_i} \vv w_i. 
    }
In particular, the vector $\vv \alpha = (\xx^{\yy_1}, \xx^{\yy_2}, \ldots, \xx^{\yy_m})^\top$ has strictly positive coordinates and spans $\ker \mm W$. Therefore, any non-zero vector in $\ker \mm W$ cannot have a zero as one of its components. This implies that any choice of $m-1$ columns of $\mm W$ form a linearly independent set, which spans $\ran \mm W = S$. 
\end{proof}

\begin{lem}
\label{lem:exchange}
    Let $(G, \vv\kk)$ be a $\textit{WR}_\textit{0}$ realization with one linkage class. Let $(G',\vv\kk')$ be a mass-action system with $V \subseteq V'$, and suppose that the vertices in $V$ are split between at least two linkage classes of $G'$.  If these systems are dynamically equivalent on $V$, then the stoichiometric subspaces of the linkage classes of $G'$ cannot be linearly independent.
\end{lem}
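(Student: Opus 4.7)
The plan is to manufacture a nontrivial linear dependence $\sum_j \vv u_j = \vv 0$ with nonzero $\vv u_j \in S_{L_j}$ for at least two distinct linkage classes $L_j$ of $G'$; any such relation witnesses linear dependence of the stoichiometric subspaces of the linkage classes of $G'$. Write $L_1, L_2, \ldots$ for the linkage classes of $G'$ and set $V^{(j)} = V \cap L_j$, so that $V = \bigsqcup_j V^{(j)}$. By hypothesis at least two of the $V^{(j)}$'s are nonempty, and each nonempty $V^{(j)}$ is a proper subset of $V$.

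The first step is geometric: since every reaction in $G'$ with source $\yy_i \in V^{(j)}$ lands in the linkage class $L_j$, the net reaction vector from $\yy_i$ in $(G',\vv\kk')$ lies in $S_{L_j}$. Dynamical equivalence on $V$ identifies this with $\vv w_i$, the net reaction vector from $\yy_i$ in $(G,\vv\kk)$. Hence $\vv w_i \in S_{L_j}$ whenever $\yy_i \in V^{(j)}$.

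The second step invokes \Cref{lem:netvect} applied to the \WRz\ system $(G, \vv\kk)$: any positive steady state $\xx$ of $(G,\vv\kk)$ yields a strictly positive vector $\vv\alpha = (\xx^{\yy_1}, \ldots, \xx^{\yy_m})^\top$ spanning the one-dimensional kernel of the matrix $\mm W = [\vv w_1 \mid \cdots \mid \vv w_m]$. In particular $\sum_{i=1}^m \alpha_i \vv w_i = \vv 0$. Splitting this relation along the partition gives the candidate vectors $\vv u_j = \sum_{\yy_i \in V^{(j)}} \alpha_i \vv w_i \in S_{L_j}$ satisfying $\sum_j \vv u_j = \vv 0$.

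The final step, and the main subtlety, is to show each $\vv u_j$ corresponding to a nonempty $V^{(j)}$ is itself nonzero. If $\vv u_j = \vv 0$, then the coordinate vector $\vv c$ with $c_i = \alpha_i$ for $\yy_i \in V^{(j)}$ and $c_i = 0$ otherwise lies in $\Ker \mm W$. But $\Ker \mm W = \Span\{\vv\alpha\}$, and every nonzero scalar multiple of the strictly positive $\vv\alpha$ has all entries nonzero; meanwhile $\vv c$ has some zero entries (because $V^{(j)} \subsetneq V$) and some nonzero entries (because $V^{(j)} \neq \emptyset$), a contradiction. With at least two nonempty $V^{(j)}$'s, we therefore obtain a nontrivial relation $\sum_j \vv u_j = \vv 0$ with each nonzero $\vv u_j \in S_{L_j}$, establishing the claim. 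The main leverage throughout is the strict positivity of the kernel generator $\vv\alpha$, which forbids truncated dependencies and thereby forces every partial sum across a nontrivial partition to be genuinely nonzero.
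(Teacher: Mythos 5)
Your proof is correct and follows essentially the same route as the paper's: partition $V$ by the linkage classes of $G'$, use dynamical equivalence to place each net reaction vector $\vv w_i$ in the stoichiometric subspace of the linkage class of $G'$ containing $\yy_i$, and exploit the strictly positive kernel vector of the matrix $\mm W$ (the content of \Cref{lem:netvect}(iii)) to produce a nontrivial dependence among those subspaces. The only difference is one of exposition: you unpack the proof of \Cref{lem:netvect}(iii) and spell out explicitly the step the paper leaves implicit, namely that each partial sum $\vv u_j$ over a nonempty proper block of the partition is nonzero.
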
 
\begin{proof}
Partition $V' = V'_1 \sqcup V'_2 \sqcup \cdots \sqcup V'_\ell$ according to the linkage classes of $G'$. For each $p=1$, $2,\ldots, \ell$, let $V_p = V'_p \cap V$. Without loss of generality (by throwing away linkage classes that do not intersect $V$), we may assume that each $V_p \neq \emptyset$. For each $\yy_i \in V$, let $\vv w_i$ be its net reaction vector, and let $S_p = \Span\{ \vv w_i \st \yy_i \in V_p\}$ be the span of net reaction vectors in $V_p$. By  \hyperref[lem:netvect]{\Cref{lem:netvect}(i)} on the connected component $V'_p$, we know that $S_p \subseteq S(V'_p)$. However, applying \hyperref[lem:netvect]{\Cref{lem:netvect}(iii)} to all of $V$ implies that $S_1$ is not linearly independent of $S_2 + S_3 + \cdots + S_\ell$, i.e., $S(V'_1)$,  $S(V'_2), \ldots, S(V'_\ell)$ are not linearly independent. 
\end{proof}

We now prove uniqueness of \WRz\ realizations for networks with any number of linkage classes. 

\begin{thm}
\label{thm:WR}
    Any weakly reversible, deficiency zero realization is unique. In other words, if  a mass-action system admits several different realizations, then at most one of them can be $\textit{WR}_\textit{0}$.
\end{thm}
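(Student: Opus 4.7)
The plan is to reduce the multi-linkage-class case to the single-linkage-class case already handled by \Cref{cor:l=1}. Suppose $(G,\vv\kk)$ and $(G',\vv\kk')$ are two \WRz\ realizations of the same associated system $\dot{\xx} = \vv f(\xx)$. By \Cref{prop:V}, they share the same vertex set, which I will denote $V = V'$, and by \Cref{prop:ell} they have the same number $\ell = \ell'$ of linkage classes. Write the two partitions induced by the linkage classes as $V = V_1 \sqcup \cdots \sqcup V_\ell$ for $G$ and $V = V'_1 \sqcup \cdots \sqcup V'_\ell$ for $G'$. The heart of the argument will be to show that these two partitions coincide (up to relabeling); once this is established, restricting to a single linkage class $V_i = V'_{j(i)}$ gives two dynamically equivalent single-linkage-class \WRz\ realizations, and \Cref{cor:l=1} then forces equality of the rate constants on every edge.

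To show the partitions agree, I would fix a linkage class $V_i$ of $G$ and rule out the two possible ways it could fail to match a linkage class of $G'$. First, if the vertices of $V_i$ were distributed among two or more linkage classes of $G'$, then restricting $G$ to $V_i$ yields a one-linkage-class \WRz\ system that is dynamically equivalent to $G'$ on $V_i$, so \Cref{lem:exchange} would produce a linear dependence among the stoichiometric subspaces of the linkage classes of $G'$, contradicting deficiency zero of $G'$ via \Cref{thm:def0}(ii). Hence $V_i$ is entirely contained in a single linkage class $V'_{j(i)}$ of $G'$. Second, if that containment were strict, $V_i \subsetneq V'_{j(i)}$, then applying \Cref{lem:contain} to the one-linkage-class restrictions $G|_{V_i}$ and $G'|_{V'_{j(i)}}$ (which are dynamically equivalent on $V_i$) would show that $V'_{j(i)}$ is not affinely independent, contradicting deficiency zero of $G'$ via \Cref{thm:def0}(i). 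Thus $V_i = V'_{j(i)}$. Doing this for every $i$ gives an assignment $i \mapsto j(i)$; a counting argument using $\ell = \ell'$ (or equivalently swapping the roles of $G$ and $G'$) shows $j$ is a bijection.

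Having matched the partitions, I then apply \Cref{cor:l=1} to each pair $(G|_{V_i}, \vv\kk|_{V_i})$ and $(G'|_{V'_{j(i)}}, \vv\kk'|_{V'_{j(i)}})$, which are single-linkage-class \WRz\ realizations of a common restricted dynamical system. This forces the edge sets and rate constants to coincide on each linkage class, and summing over $i$ gives $(G,\vv\kk) = (G',\vv\kk')$.

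The main obstacle, and where the work actually lies, is the partition-matching step: one must genuinely exclude both of the pathological scenarios illustrated in \Cref{fig:contain,fig:exchange}, and each scenario is precisely the content of one of the two structural lemmas. The deficiency-zero hypothesis is used in two distinct guises (affine independence of each linkage class, and linear independence between the stoichiometric subspaces of different linkage classes), matching the two cases via \Cref{thm:def0}. After this, the proof is essentially bookkeeping, since the affine-independence within each matched linkage class makes the local uniqueness of rate constants a straightforward linear-algebra statement already packaged into \Cref{cor:l=1}.
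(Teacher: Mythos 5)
Your proposal is correct and follows essentially the same route as the paper: it uses \Cref{prop:V} and \Cref{prop:ell} to fix the vertex set and the number of linkage classes, invokes \Cref{lem:contain} to rule out proper containment of one linkage class in another and \Cref{lem:exchange} to rule out a linkage class being split across several, and then concludes via \Cref{cor:l=1} on each matched linkage class. The only (cosmetic) difference is that the paper phrases the partition-matching step as an induction on the number of differing linkage classes, whereas you argue directly that the map $i \mapsto j(i)$ is a well-defined bijection.
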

\begin{proof}
    Let $(G,\vv\kk)$ be a \WRz\ realization. Suppose that $(G',\vv\kk')$ is a dynamically equivalent \WRz\ realization. As noted in \Cref{prop:V,prop:ell}, the vertices of \WRz\ realizations are determined by the monomials that appear explicitly in the associated system of differential equations, and the number of linkage classes remains constant. Moreover, each linkage class has zero deficiency. Partition the vertices by the linkage classes of $G$ as $V = V_1 \sqcup V_2 \sqcup \cdots \sqcup V_{\ell}$. Similarly partition the vertices by the linkage classes of $G'$, as in $V = V'_1 \sqcup V'_2 \sqcup \cdots \sqcup V'_\ell$. If any of linkage classes share the same set of vertices, i.e., $V_i = V'_j$ for some $i$, $j$, it follows from Corollary \ref{cor:l=1} that this linkage class in $(G,\vv\kk)$ is identical in structure and rate constants to that of $(G', \vv\kk')$. We proceed by induction on the number of differing linkage classes. Without loss of generality, we assume that $\ell \geq 2$ and there is no identical linkage classes between the two realization. 
    
    Recall from \Cref{thm:def0} that any deficiency zero realization has affinely independent linkage classes, and the stoichiometric subspaces of the linkage classes are linearly independent. The linkage class, say $V_1$, of $G$ intersects non-trivially with some linkage class of $G'$, say $V'_j$, in the sense that either $V'_j \setminus V_1 \neq \emptyset$ or $V_1 \setminus V'_j \neq \emptyset$ (or both). It is neither the case that $V_1 \subsetneq V'_j$ nor $V'_j \subsetneq V_1$, or we would contradict the affine independence assumption of the larger subset of vertices by Lemma~\ref{lem:contain}. Hence it must be the case that the intersection is non-trivial, i.e., $V_1 \cap V'_j \neq \emptyset$, and the symmetric difference is also non-trivial, i.e., $V_1 \triangle V'_j \neq \emptyset$. This falls under the setup of \Cref{lem:exchange}; thus $S(V'_j)$ is not linearly independent from $S(V'\setminus V'_j)$. In other words, the stoichiometric subspaces of the linkage classes of $G'$ are not linearly independent and $G'$ has positive deficiency. Therefore, we conclude that $V_1 = V'_j$, a contradiction.
\end{proof}

\section{Discussion}
\label{sec:concl}

In this paper, we proved that any weakly reversible, deficiency zero (\WRz) realization of a mass-action system is unique, as conjectured in \cite{CraciunJohnstonSzederkenyiTonelloTothYu2020}. 
Since deficiency zero weakly reversible networks are {\em minimal} representations of mass-action systems, this provides a possible {\em Occam's razor} approach to network identification and parameter identification, since neither one of these identification problems has a unique solution in general~\cite{CraciunPantea2008}. 

In future work~\cite{paper_3_part_2}, we will use some of the approaches developed here to design an efficient algorithm for the identification of these networks. 
A similar approach may be used for the identification of network representations of lowest deficiency, and allow for wider applicability of classical results for networks with positive deficiency, such as the Deficiency One Theorem~\cite{feinberg2019foundations}.  

\section*{Acknowledgements} 

The authors were supported in part by the National Science Foundation under grant DMS--1816238. G.C. was also partially supported by a Simons Foundation fellowship, and P.Y.Y. was also partially supported by the NSERC.  
\bibliographystyle{siam}
\bibliography{cit}

\end{document}